\newtheorem{thm}{Theorem}[section]
\title{\bf  A duality principle and related computational method for a class of structural optimization problems in elasticity}
\author{Fabio Silva Botelho$^1$ and Alexandre Molter$^2$\\ \small{$^1$Department of Mathematics, Federal University of Santa Catarina (UFSC), Florian\'{o}polis, SC - Brazil}\\ \small{$^2$Department of Mathematics and Statistics, Federal University of Pelotas (UFPel), Pelotas, RS - Brazil}}
\date{}
\begin{document}
\maketitle

\hspace{0.5cm}$\underline{\hspace{15.8cm}}$
\abstract{ In this article we develop a duality principle and concerning computational method for a structural optimization problem in elasticity.
We consider the problem of finding the optimal topology for an elastic solid which minimizes its structural inner energy resulting from the action of
external loads to be specified. The main results are obtained through standard tools of convex analysis and duality theory.
We emphasize our algorithm do not include a filter to process the results, so that the result obtained is indeed a critical point for the original optimization problem.  Finally, we present some numerical examples concerning  applications of the theoretical results established.}

\noindent\hrulefill

\section{Introduction} Consider an elastic solid which the volume corresponds to an open, bounded, connected set, denoted by $\Omega \subset \mathbb{R}^3$
with a regular (Lipschitzian) boundary denoted by $\partial \Omega=\Gamma_0 \cup \Gamma_t$ where $\Gamma_0 \cap \Gamma_t = \emptyset.$
Consider also the problem of minimizing the functional $\hat{J}:U \times B \rightarrow \mathbb{R}$ where
$$\hat{J}(u,t)=\frac{1}{2}\langle u_i,f_i\rangle_{L^2(\Omega)}+\frac{1}{2}\langle u_i,\hat{f}_i\rangle_{L^2(\Gamma_t)},$$
subject to
\begin{equation} \left\{
\begin{array}{lll}
 (H_{ijkl}(t) e_{kl}(u))_{,j}+f_i=0 \text{ in } \Omega,
 \\\\
 H_{ijkl}(t) e_{kl}(u) n_j-\hat{f}_i=0, \text{ on } \Gamma_t, \; \forall i \in \{1,2,3\}.\end{array} \right.\end{equation}
Here $\mathbf{n}=(n_1,n_2,n_3)$ denotes the outward normal to $\partial \Omega$ and  $$U=\{u=(u_1,u_2,u_3) \in W^{1,2}(\Omega;\mathbb{R}^3)\;:\; u=(0,0,0)=\mathbf{0} \text{ on } \Gamma_0\},$$

$$B=\left\{ t:\Omega \rightarrow [0,1] \text{ measurable }\;:\;\int_\Omega t(x)\;dx=t_1 |\Omega|\right\},$$
where $$0<t_1<1$$ and $|\Omega|$ denotes the Lebesgue measure of $\Omega.$

Moreover $u=(u_1,u_2,u_3) \in W^{1,2}(\Omega; \mathbb{R}^3)$ is the field of displacements relating the cartesian system $(0,x_1,x_2,x_3)$,
resulting from the action of the external loads $f \in L^2(\Omega;\mathbb{R}^3)$ and $\hat{f} \in L^2(\Gamma_t;\mathbb{R}^3).$

We also define the stress tensor $\{\sigma_{ij}\} \in Y^*=Y=L^2(\Omega; \mathbb{R}^{3 \times 3}),$ by
$$\sigma_{ij}(u)=H_{ijkl}(t) e_{kl}(u),\;$$
and the strain tensor $e:U \rightarrow L^2(\Omega;\mathbb{R}^{3 \times 3})$ by $$e_{ij}(u)=\frac{1}{2}(u_{i,j}+u_{j,i}),\; \forall i,j \in \{1,2,3\}.$$

Finally, $$\{H_{ijkl}(t)\}=\{t H_{ijkl}^0+(1-t)H_{ijkl}^1\},$$ where
$H^0$ corresponds to a strong material and $H^1$ to a very soft material, intending to simulate voids along the solid structure.

The variable $t$ is the design one, which the optimal distribution values along the structure are intended to minimize its inner work
with a volume restriction indicated through the set $B$.

 The duality principle obtained is developed inspired by the works in \cite{85, 2900}. Similar theoretical results have been developed in
\cite{12a}, however we believe the proof here presented, which is based on the min-max theorem  is easier to follow (indeed we thank an anonymous referee for his suggestion about applying the min-max theorem to complete the proof). A theory for a topology optimization problem in elasticity is presented in \cite{Xia}, even though in our book \cite{12a} of 2014, we have developed a more general
 result with a proof based on the inverse function theorem. Also, dual methods for discrete structural optimization problems were used in \cite{beck} .

 We highlight throughout this text we have used the standard Einstein sum convention of  repeated indices. Related models, among others,  are addressed in \cite{50}.

A  Matlab code using a filter for the numerical computation of similar problems is presented in \cite{3.1}. We emphasize in our algorithm we have not used a filter. The majority of topology optimization works use filtering to avoid the check-board effect \cite{51, ZuoSMO2017}. One of the proposals of this work is to increase discretization in the direction of the loads to avoid this problem.

Moreover, details on the Sobolev spaces addressed may be found in \cite{1}. In addition, the primal variational development of the topology optimization problem has been described in \cite{52}.

One of the main contributions of this work is to present  detailed theoretical developments for such a class of structural optimization problems,
through duality theory and an application of the min-max theorem. We have also discovered that without the use of any filters, to avoid the
up-surging of the check-board problem in some parts of the optimal structure, it is necessary to discretize more in the load direction, in which
the displacements are much larger.

Finally, it is  worth mentioning the numerical examples presented have been developed in a Finite Element (FE) context, based on the work of \cite{3.1}.

\section{Mathematical formulation of the topology optimization problem} 
Our mathematical topology optimization problem is summarized by the following theorem.

\begin{thm}
Consider the statements and assumptions indicated in the last section, in particular those refereing to $\Omega$ and
the functional $\hat{J} :U \times B \rightarrow \mathbb{R}.$

Define $J_1: U \times B \rightarrow \mathbb{R}$ by
$$J_1(u,t)=-G(e(u),t)+\langle u_i,f_i\rangle_{L^2(\Omega)}+\langle u_i,\hat{f}_i\rangle_{L^2(\Gamma_t)},$$
where
$$G(e(u),t)=\frac{1}{2}\int_\Omega H_{ijkl}(t) e_{ij}(u)e_{kl}(u)\;dx,$$
and where $$dx=dx_1dx_2dx_3.$$

Define also $J^*:U \rightarrow \mathbb{R}$ by
\begin{eqnarray}J^*(u)&=&\inf_{t \in B}\{J_1(u,t)\}
\nonumber \\ &=& \inf_{t \in B}\{- G(e(u),t)+ \langle u_i,f_i\rangle_{L^2(\Omega)}+\langle u_i,\hat{f}_i\rangle_{L^2(\Gamma_t)}\}.\end{eqnarray}

Assume there exists $c_0,c_1>0$ such that
$$H_{ijkl}^0 z_{ij}z_{kl} > c_0 z_{ij}z_{ij}$$ and
$$H_{ijkl}^1 z_{ij}z_{kl}>c_1 z_{ij}z_{ij}, \; \forall z=\{z_{ij}\} \in \mathbb{R}^{3 \times 3},\; \text{ such that } z \neq \mathbf{0}.$$

Finally, define $J: U \times B \rightarrow \mathbb{R} \cup\{+\infty\}$ by
$$J(u,t)=\hat{J}(u,t)+Ind(u,t),$$
where
\begin{equation} Ind(u,t)=\left\{
\begin{array}{ll}
 0,& \text{ if } (u,t) \in A^*,
 \\
 +\infty,& \text{ otherwise },\end{array} \right.\end{equation}
where $A^*=A_1 \cap A_2,$
$$A_1=\{(u,t) \in U \times B\;:\; (\sigma_{ij}(u))_{,j} +f_i=0, \text{ in } \Omega, \; \forall i \in \{1,2,3\}\}$$
and
$$A_2=\{(u,t) \in U \times B\;:\; \sigma_{ij}(u)n_j -\hat{f}_i=0, \text{ on } \Gamma_t, \; \forall i \in \{1,2,3\}\}.$$
Under such hypotheses, there exists $(u_0,t_0) \in U \times B$ such that

\begin{eqnarray}
J(u_0,t_0)&=& \inf_{(u,t) \in U \times B} J(u,t) \nonumber \\ &=& \sup_{ \hat{u} \in U} J^*(\hat{u}) \nonumber \\ &=&
J^*(u_0) \nonumber \\ &=& \hat{J}(u_0,t_0) \nonumber \\ &=& \inf_{(t,\sigma) \in B \times C^*} G^*(\sigma,t) \nonumber \\ &=& G^*(\sigma(u_0),t_0),\end{eqnarray}
where
\begin{eqnarray}G^*(\sigma,t)&=& \sup_{v \in Y} \{ \langle v_{ij},\sigma_{ij} \rangle_{L^2(\Omega)}-G(v,t)\}
\nonumber \\ &=& \frac{1}{2}\int_\Omega \overline{H}_{ijkl}(t) \sigma_{ij}\sigma_{kl}\;dx,
\end{eqnarray}
$$\{\overline{H}_{ijkl}(t)\}=\{H_{ijkl}(t)\}^{-1}$$ and
 $C^*=C_1 \cap C_2,$
 where
 $$C_1=\{ \sigma \in Y^*\;:\; \sigma_{ij,j} +f_i=0, \text{ in } \Omega, \; \forall i \in \{1,2,3\}\}$$
and
$$C_2=\{\sigma \in Y^*\;:\; \sigma_{ij}n_j -\hat{f}_i=0, \text{ on } \Gamma_t, \; \forall i \in \{1,2,3\}\}.$$
\end{thm}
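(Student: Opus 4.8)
The plan is to collapse the entire chain of equalities onto a single min-max identity for the concave--convex functional $J_1$, supplemented by the classical Fenchel duality between the displacement and the stress (complementary energy) formulations for each frozen design variable $t$. Throughout, write $\ell(u)=\langle u_i,f_i\rangle_{L^2(\Omega)}+\langle u_i,\hat{f}_i\rangle_{L^2(\Gamma_t)}$ for the load functional.

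First I would record the virtual work identity on the admissible set: for $(u,t)\in A^*$, testing the equilibrium equation with $u$ and integrating by parts, using $u=\mathbf{0}$ on $\Gamma_0$, the traction condition on $\Gamma_t$, and the symmetry of $\sigma$, gives $\langle \sigma_{ij}(u),e_{ij}(u)\rangle_{L^2(\Omega)}=\ell(u)$. Since $\langle \sigma(u),e(u)\rangle_{L^2(\Omega)}=2\,G(e(u),t)$, this yields $\hat{J}(u,t)=G(e(u),t)=J_1(u,t)$ on $A^*$. Next, for fixed $t\in B$, the map $u\mapsto J_1(u,t)=-G(e(u),t)+\ell(u)$ is strictly concave and coercive, because the ellipticity constants $c_0,c_1$ together with Korn's inequality make $G(\cdot,t)$ a coercive positive-definite quadratic form. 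Hence its supremum over $U$ is attained at the unique $u_t$ for which the stationarity condition holds, and that condition is precisely the weak form of the equilibrium system, i.e. $(u_t,t)\in A^*$. Consequently $\sup_{u\in U}J_1(u,t)=J_1(u_t,t)=G(e(u_t),t)$, and since $J=\hat{J}$ on $A^*$ and $J=+\infty$ off $A^*$, I obtain the reformulation
$$\inf_{(u,t)\in U\times B} J(u,t)=\inf_{t\in B}\sup_{u\in U}J_1(u,t).$$

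The central step is to interchange the order of $\inf_t$ and $\sup_u$. Here I would invoke the min-max theorem. The decisive structural fact is that $H_{ijkl}(t)=tH^0_{ijkl}+(1-t)H^1_{ijkl}$ depends \emph{affinely} on $t$, so $J_1(u,\cdot)$ is affine and continuous in $t$ --- hence both quasiconvex and quasiconcave --- while $J_1(\cdot,t)$ is concave and weakly upper semicontinuous in $u$. The set $B$ is convex and, in the weak-$*$ topology of $L^\infty(\Omega)$, compact (the affine volume constraint $\int_\Omega t=t_1|\Omega|$ is weak-$*$ closed, being pairing against the $L^1$ function $1$). Because the $t$-dependence is affine, $B$ can serve as the compact set irrespective of orientation conventions in Sion's theorem, and the interchange
$$\inf_{t\in B}\sup_{u\in U}J_1(u,t)=\sup_{u\in U}\inf_{t\in B}J_1(u,t)=\sup_{u\in U}J^*(u)$$
follows. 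A saddle point $(u_0,t_0)$ then exists: the inner minimum over the compact $B$ is attained, and $J^*$ is concave, weakly upper semicontinuous and coercive (being dominated by any $J_1(\cdot,t)$), so its supremum over the reflexive space $U$ is attained at some $u_0$.

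It remains to connect with the stress formulation. For fixed $t$, I would identify $\sup_{u\in U}J_1(u,t)=-\inf_{u\in U}\{G(e(u),t)-\ell(u)\}$ as the Fenchel dual of the complementary energy: using $G^{**}=G$ and the fact that $\inf_{u}\{\langle \sigma,e(u)\rangle-\ell(u)\}$ is finite exactly when $\sigma$ satisfies equilibrium weakly, i.e. $\sigma\in C^*$, convex duality with no gap (again from coercivity) gives $\sup_{u\in U}J_1(u,t)=\inf_{\sigma\in C^*}G^*(\sigma,t)$, the minimizer being $\sigma(u_t)=H_{ijkl}(t)e_{kl}(u_t)$, for which $G^*(\sigma(u_t),t)=G(e(u_t),t)$ since $\{\overline{H}\}=\{H\}^{-1}$. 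Taking $\inf_{t\in B}$ yields $\inf_{t\in B}\sup_{u\in U}J_1=\inf_{(t,\sigma)\in B\times C^*}G^*(\sigma,t)$. Assembling everything, the primal infimum equals $\sup_{u}J^*=J^*(u_0)$ by the min-max step and equals $\inf_{(t,\sigma)}G^*$ by the Fenchel step; and because $(u_0,t_0)$ is a saddle point, $u_0$ solves equilibrium for $t_0$, so $(u_0,t_0)\in A^*$, whence $J(u_0,t_0)=\hat{J}(u_0,t_0)=G(e(u_0),t_0)=J^*(u_0)=G^*(\sigma(u_0),t_0)$. The main obstacle I anticipate is the rigorous application of the min-max theorem: selecting a topology in which $B$ is genuinely compact while $J_1(u,\cdot)$ remains continuous and the supremum over the \emph{non-compact} space $U$ is attained. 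This is exactly where the ellipticity of $H^0,H^1$, Korn's inequality, and the affine dependence on $t$ must be used with care; by comparison, the no-gap complementary-energy duality is routine, resting on the same coercivity.
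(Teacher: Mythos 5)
Your proposal is correct and follows essentially the same route as the paper: rewrite the constrained primal problem as $\inf_{t\in B}\sup_{u\in U}J_1(u,t)$, invoke the min-max theorem to interchange the extrema and obtain a saddle point $(u_0,t_0)$, identify the dual problem through the complementary energy $G^*(\sigma,t)$, and close the chain of equalities using the equilibrium (virtual work) identity at the saddle point. Your write-up is in fact somewhat more careful than the paper's on the analytic side (explicit Sion hypotheses, weak-$*$ compactness of $B$, Korn's inequality and coercivity for attainment of the supremum over the non-compact $U$), while sharing with the paper the same lightly justified step that the chosen pair $(u_0,t_0)$ is genuinely a saddle point, so that $u_0$ satisfies the equilibrium system for $t_0$.
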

\begin{proof}
Observe that
\begin{eqnarray}
\inf_{(u,t) \in U \times B} J(u,t) &=&
\inf_{t \in B} \left\{ \inf_{ u \in U} J(u,t) \right\}
\nonumber \\ &=& \inf_{t \in B}\left\{\sup_{ \hat{u} \in U} \left\{ \inf_{u \in U}\left \{ \frac{1}{2}\int_\Omega H_{ijkl}(t) e_{ij}(u)e_{kl}(u)\;dx
\right. \right.\right.
\nonumber \\ &&+\langle \hat{u}_i, (H_{ijkl}(t)e_{kl}(u))_{,j}+f_i\rangle_{L^2(\Omega)}  \nonumber \\ &&
\left.\left.\left.- \langle \hat{u}_i,H_{ijkl}(t)e_{kl}(u)n_j-\hat{f}_i \rangle_{L^2(\Gamma_t)}\right\}\right\}\right\}\nonumber \\ &=&
\inf_{t \in B}\left\{\sup_{ \hat{u} \in U} \left\{ \inf_{u \in U}\left \{ \frac{1}{2}\int_\Omega H_{ijkl}(t) e_{ij}(u)e_{kl}(u)\;dx \right.\right.\right.
\nonumber \\ &&-\int_\Omega  H_{ijkl}(t) e_{ij}(\hat{u})e_{kl}(u)\;dx \nonumber \\ && \left.\left.\left.+
\langle \hat{u}_i,f_i \rangle_{L^2(\Omega)} +\langle \hat{u}_i, \hat{f}_i\rangle_{L^2(\Gamma_t)}   \right\}\right\}\right\}\nonumber \\ &=&
 \inf_{t \in B} \left\{ \sup_{\hat{u} \in U}\left\{  -\int_\Omega H_{ijkl}(t) e_{ij}(\hat{u})e_{kl}(\hat{u})\;dx \right.\right.\nonumber \\ &&
 \left.\left.\langle \hat{u}_i,f_i \rangle_{L^2(\Omega)} +\langle \hat{u}_i, \hat{f}_i\rangle_{L^2(\Gamma_t)}\right\}\right\} \nonumber \\ &=&
 \inf_{t \in B} \left\{ \inf_{ \sigma \in C^*} G^*(\sigma,t)\right\}.
 \end{eqnarray}

 Also, from this and the min-max theorem, there exist $(u_0,t_0) \in U \times B$ such that
 \begin{eqnarray}
 \inf_{(u,t) \in U \times B} J(u,t)  &=&
 \inf_{t \in B} \left\{ \sup_{\hat{u} \in U} J_1(u,t)\right\} \nonumber \\ &=& \sup_{ u \in U} \left\{\inf_{t \in B} J_1(u,t) \right\}
 \nonumber \\ &=& J_1(u_0,t_0) \nonumber \\ &=& \inf_{t \in B}  J_1(u_0,t) \nonumber \\ &=& J^*(u_0).
 \end{eqnarray}

 Finally, from the extremal necessary condition
 $$\frac{\partial J_1(u_0,t_0)}{\partial u}=\mathbf{0}$$ we obtain
 $$ (H_{ijkl}(t_0)e_{kl}(u_0))_{,j} +f_i=0 \text{ in } \Omega,$$
 and
 $$H_{ijkl}(t_0)e_{kl}(u_0)n_j-\hat{f}_i=0 \text{ on } \Gamma_t,\; \forall i \in \{1,2,3\},$$ so that
 $$G(e(u_0))=\frac{1}{2}\langle (u_0)_i,f_i \rangle_{L^2(\Omega)} +\frac{1}{2}\langle (u_0)_i, \hat{f}_i \rangle_{L^2(\Gamma_t)}.$$

 Hence $(u_0,t_0) \in A^*$ so that $Ind(u_0,t_0)=0$ and $\sigma(u_0) \in C^*.$

 Moreover
 \begin{eqnarray}
 J^*(u_0)  &=&
 -G(e(u_0))+\langle (u_0)_i,f_i \rangle_{L^2(\Omega)} +\langle (u_0)_i, \hat{f}_i \rangle_{L^2(\Gamma_t)} \nonumber \\ &=&
 G(e(u_0)) \nonumber \\ &=& G(e(u_0))+Ind(u_0,t_0)\nonumber \\ &=& J(u_0,t_0) \nonumber \\ &=& G^*(\sigma(u_0),t_0).
 \end{eqnarray}

 This completes the proof.
 \end{proof}


 \section{About the computational method}

The continuous topology optimization problem described in the previous section is discretized using
the FE method, considering in plane deformations.  The FE discretization is performed taking into account the
bilinear isoparametric element as a master one, in similar way as in \cite{bendsoe2003, 3.1}.

To obtain computational results, we have defined the following algorithm.

 \begin{enumerate}
 \item Set $n=1$.
 \item Set $t_1(x)=t_1, \text{ in } \Omega.$
 \item\label{3} Calculate $u_n \in U$ as the solution of equation
 $$\frac{\partial J_1(u,t_n)}{\partial u}=\mathbf{0},$$ that is
 \begin{equation}\label{eq1} \left\{
\begin{array}{lll}
 (H_{ijkl}(t_n) e_{kl}(u_n))_{,j}+f_i=0 \text{ in } \Omega,
 \\\\
 H_{ijkl}(t_n) e_{kl}(u_n) n_j-\hat{f}_i=0, \text{ on } \Gamma_t, \; \forall i \in \{1,2,3\}.\end{array} \right.\end{equation}
\item\label{S1}  Obtain $t_{n+1}$ by
$$t_{n+1}=\text{arg}\min_{t \in B} J_1(u_n,t).$$
\item Set $n := n+1$ and go to  step \ref{3} up to the satisfaction of an appropriate convergence criterion.
\end{enumerate}

In the FE formulation, equations indicated in \ref{eq1} stands for 

\begin{equation}\label{matricialglobaleq}
  \mathbf{H}(t)\mathbf{U}=\mathbf{f},
\end{equation}
where $\mathbf{H}(t)$ is the global stiffness matrix, $\mathbf{U}$ is the global displacements vector and $\mathbf{f}$ is the global forces one.

 Thus, for such a FE  models ($N$ elements where $e \in \{1,...,N\}$), the primal optimization problem can be written in a matrix form as
\begin{eqnarray}\label{eq21}
\begin{array}{cl}
\text{min }  & \displaystyle \hat{J}(u,t)= \displaystyle
         \frac{1}{2}\mathbf{U}^T\mathbf{H}(t)\mathbf{U}\\
      & = \displaystyle{\frac{1}{2}\sum_{e=1}^{N}({t_{{e}}})^p\mathbf{u}_e^T\mathbf{H}_e\mathbf{u}_e} \\
\text{ subject to } & \displaystyle ({t_{{e}}})^p\mathbf{H}_e\mathbf{u}_e=\mathbf{f}_e\\
      & \displaystyle \sum_{e=1}^{N}{t_{{e}}}V_e=t_1 |\Omega| \\
      & \displaystyle 0\leq  t \leq 1 \\
      & \displaystyle e=1,2,3,...,N,
\end{array}\end{eqnarray}
On the other hand, the dual problem may be expressed by
\begin{eqnarray}\label{eq21}
\begin{array}{cl}
\text{ max } & \displaystyle J^*(u), \mbox{ where } \\
      & J^*(u)= \displaystyle{\min_{t \in B}} \left(-\frac{1}{2}\sum_{e=1}^{N}\left(({t_{{e}}})^p\mathbf{u}_e^T\mathbf{H}_e\mathbf{u}_e+\mathbf{f}_e\mathbf{u}_e\right)\right)\\
& \mbox{ where } t \in B \mbox{ if and only if }\\
      & \displaystyle \sum_{e=1}^{N}{t_{{e}}}V_e = t_1 |\Omega| \\
      & \displaystyle 0\leq t_{e} \leq  1,\\
      & \displaystyle e=1,2,3,...,N,
\end{array}\end{eqnarray}
and where $V_e$ is the area of element $e$.

Finally, the last minimization indicated corresponds to item \ref{S1} in the concerning algorithm.
Indeed, such a procedure refers to minimize at each sub-iteration, through the Matlab Linprog routine (that is, in a sequentially  linearized context), the function $$\sum_{e=1}^N\frac{\partial J_1(u_n,\{t_e^n\})}{\partial t_e}\,t_e=\sum_{e=1}^N\left(-p (t_e^n)^{p-1} t_e \mathbf{u}_e^T\mathbf{H}_e\mathbf{u}_e\right)$$ subject to $t \in B,$ where $p$ is a penalization parameter (typically, $p=3$).

\section{Computational simulations and results}

We present numerical results in an analogous two-dimensional context, more specifically for  two-dimensional beams of dimensions $1 \times l $ (units refer to the international system) represented by $\Omega=[0,1]\times [0,l]$, with $l=0.5$, $F=-10^6$ for the first case,  $l=0.5$ and $F=-10^7$ for the second one,
 $l=0.6$ and $F=-10^6$ for the third case and,  $l=1$ and $F=-10^8$ for the fourth one. $F$ is in the $y$-direction and corresponds to $f$ of the theoretical formulation presented above.

We consider the strain tensor as
$$e(\mathbf{u})=(e_x(\mathbf{u}),e_y(\mathbf{u}),e_{xy}(\mathbf{u}))^T,$$ where $\mathbf{u}=(u,v) \in W^{1,2}(\Omega;\mathbb{R}^2),$
 $e_x(\mathbf{u})= u_x$, $e_y(\mathbf{u})=v_y$ and $e_{xy}(\mathbf{u})=\frac{1}{2}(u_y+v_x).$

Moreover the stress tensor $\sigma(e(\mathbf{u}))$ is given by
$$\sigma(e(\mathbf{u}))=H(t) e(\mathbf{u}),$$ where
\begin{equation}H(t)= \frac{E(t)}{1-\nu^2}\left\{
\begin{array}{lcr}
 1 & \nu & 0\\
 \nu& 1& 0 \\
 0 & 0& \frac{1}{2}(1-\nu)\end{array} \right\}\end{equation}
 and $$E(t)=t E_0+(1-t) E_1,$$
 where $E_0=210 * 10^9$ (the modulus of Young) and $E_1\ll E_0.$ Moreover $\nu=0.33$.

 As previously mentioned, we present four numerical simulations.

\vspace{0.2cm}
\noindent{\bf Case 1}. For the first case see figure \ref{figure1}, on the left, for the concerning case, figure \ref{figure1}, in the middle,  for the optimal topology for  this  case with no filter, figure \ref{figure1}, on the right, for the optimal topology
 for this first case with filter. For the objective function as function of number of iterations also for such a case with no filter, see figure \ref{figure2}, on the left,  and for the objective function as function of number of iterations also for this first case with filter, see figure \ref{figure2}, on the right.
\begin{figure}[!htb]
\begin{center}
	\includegraphics[height=2.8cm]{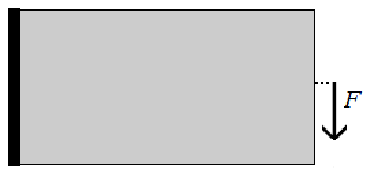}
    \includegraphics[height=3.2cm]{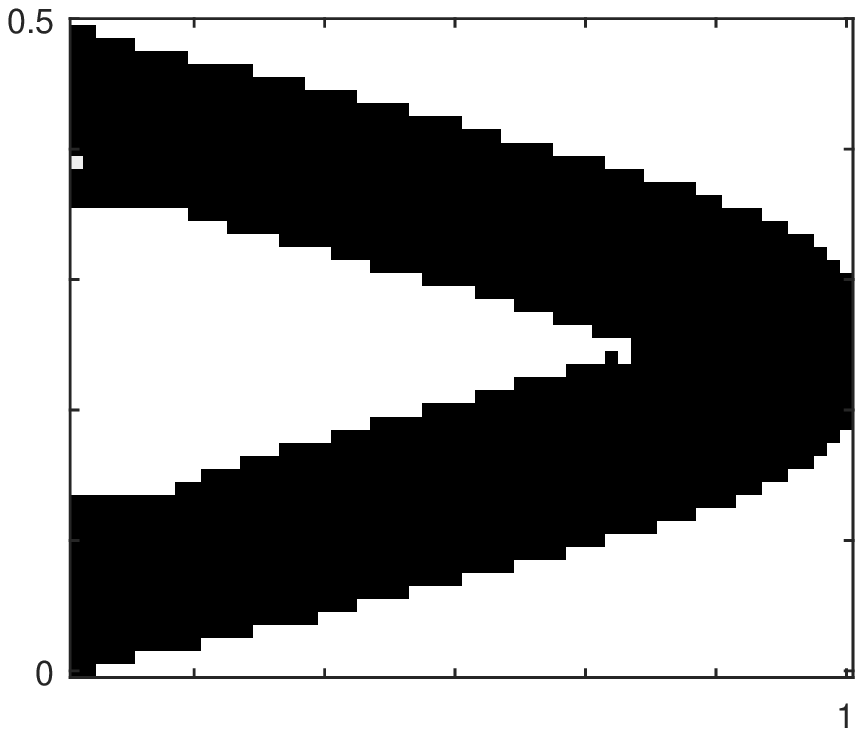}
    \includegraphics[height=3.2cm]{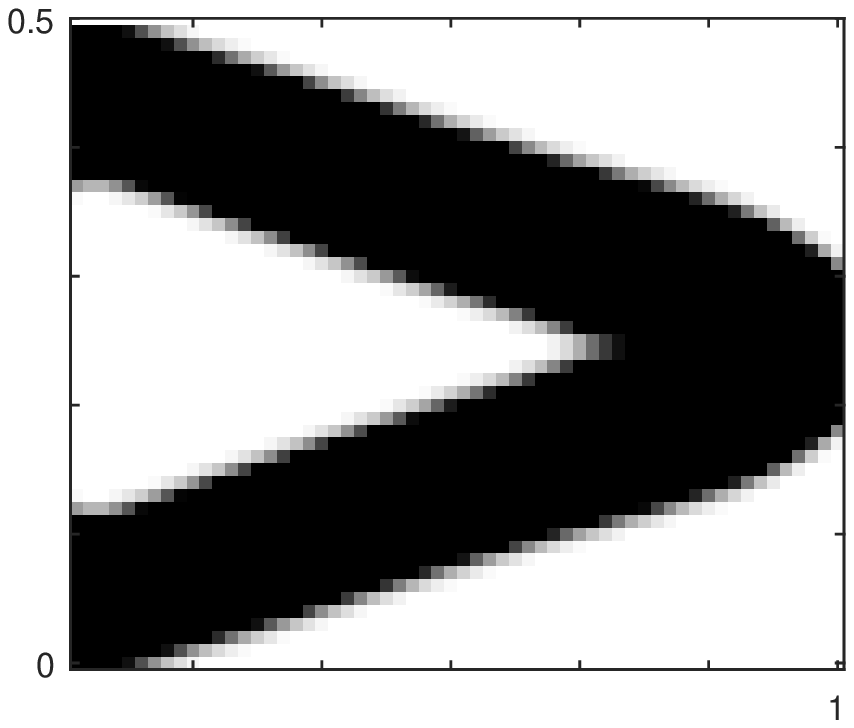}
    \vspace{-0.3cm}
\caption{\small{on the left a clamped beam at $x=0$ (cantilever beam). In the middle the optimal topology for $t_1=0.5$, for the case with no filter. On the right the optimal topology  for $t_1=0.5$, for the case with filter. The FE mesh was 60x50.}} \label{figure1}
\end{center}
\end{figure}
\begin{figure}[!htb]
\begin{center}
	\includegraphics[height=6cm,width=7cm]{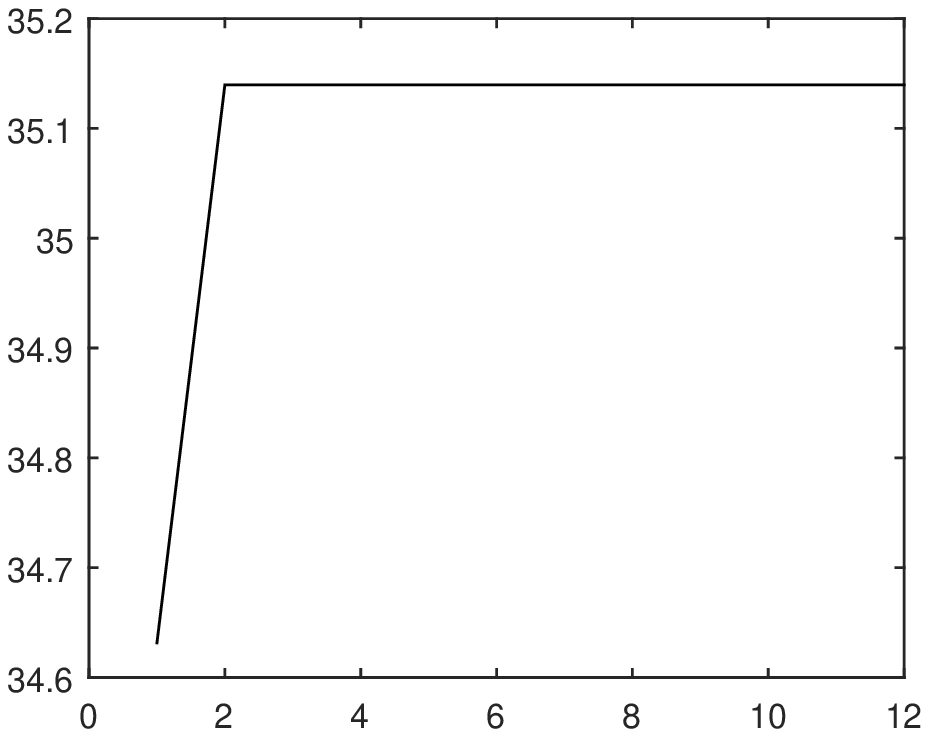}
    \includegraphics[height=6cm,width=7cm]{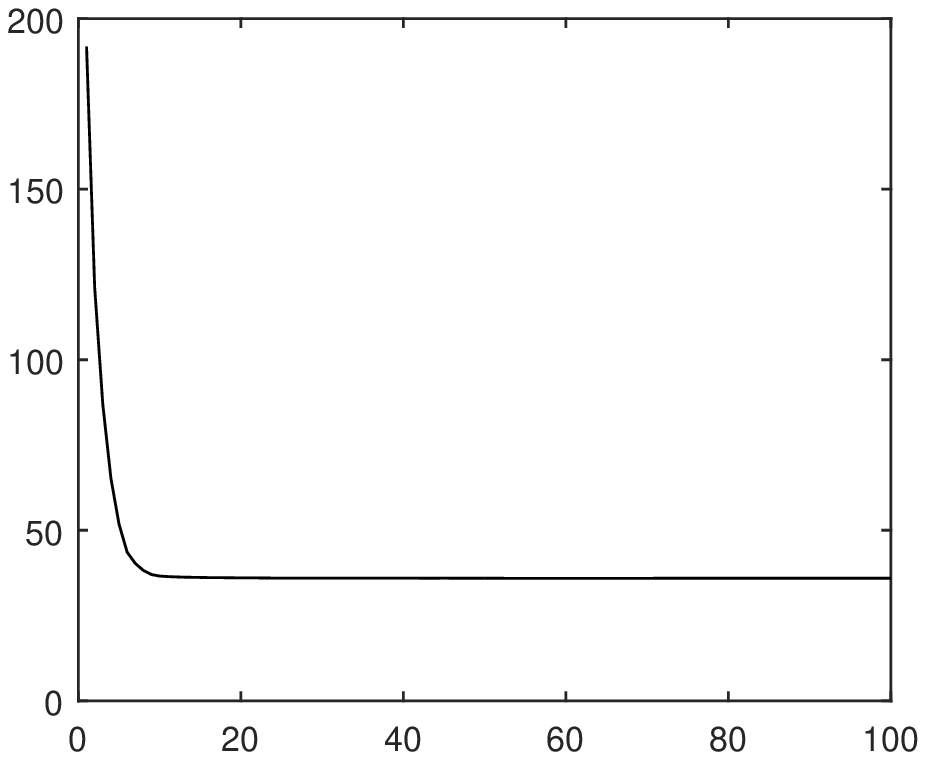}
    \vspace{-0.3cm}
 \caption{\small{on the left the objective function by iteration numbers for $t_1=0.5$, for the case with no filter. On the right the objective function by iteration numbers for $t_1=0.5$, for the case with filter.}} \label{figure2}
\end{center}
\end{figure}

\vspace{0.2cm}
\noindent{\bf Case 2}. For the second case see figure \ref{figure3}, on the left, for the concerning case, figure \ref{figure3}, in the middle,  for the optimal topology for  this  case with no filter, figure \ref{figure3}, on the right, for the optimal topology
 for this second case with filter. For the objective function as function of number of iterations also for such a case with no filter, see figure \ref{figure4}, on the left,  and for the objective function as function of number of iterations also for this second case with filter, see figure \ref{figure4}, on the right.
\begin{figure}[!htb]
\begin{center}
	\includegraphics[height=4cm]{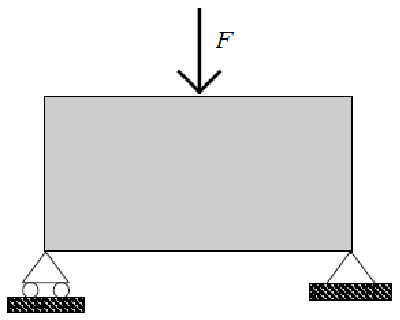}
    \includegraphics[height=3.5cm,width=5.5cm]{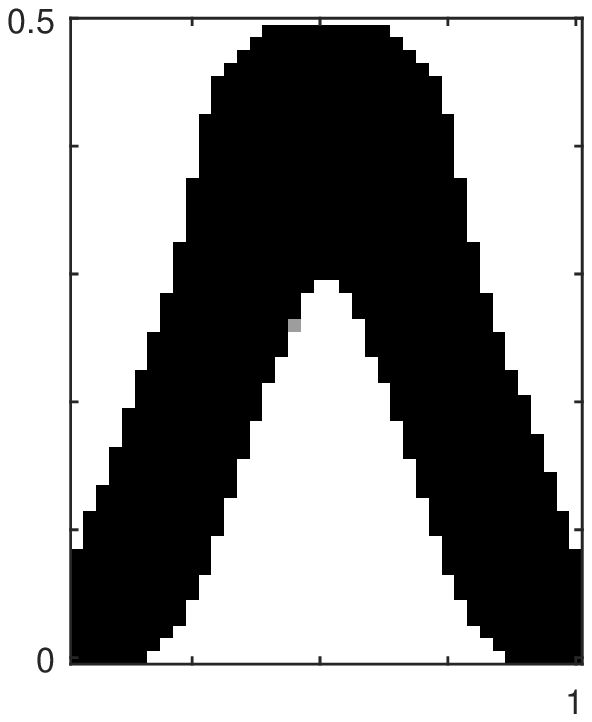}
    \includegraphics[height=3.5cm,width=5.5cm]{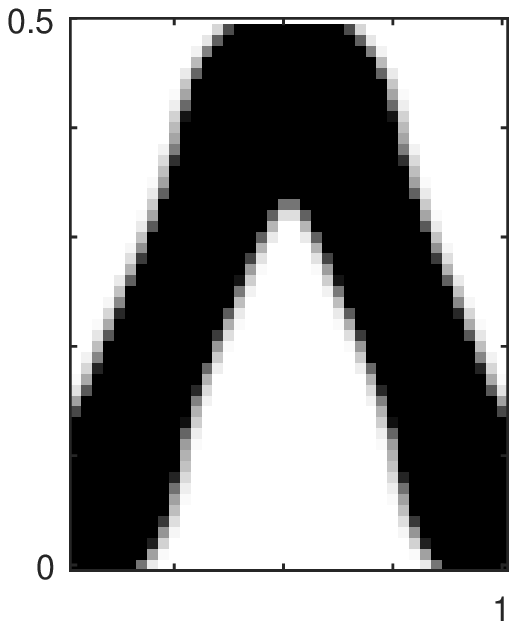}
    \vspace{-0.3cm}
\caption{\small{on the left a simply supported beam at $x=0$ and $x=1$. In the middle the optimal topology  for $t_1=0.5$, for the case with no filter. On the right the optimal topology  for $t_1=0.5$, for the case with filter. The FE mesh was 40x50.}} \label{figure3}
\end{center}
\end{figure}
\begin{figure}[!htb]
\begin{center}
	\includegraphics[height=5.5cm,width=7cm]{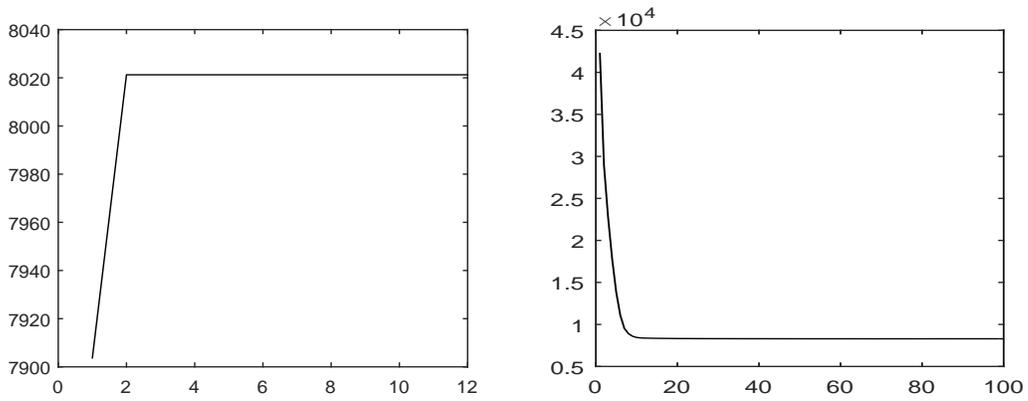}
    \includegraphics[height=5.5cm,width=7cm]{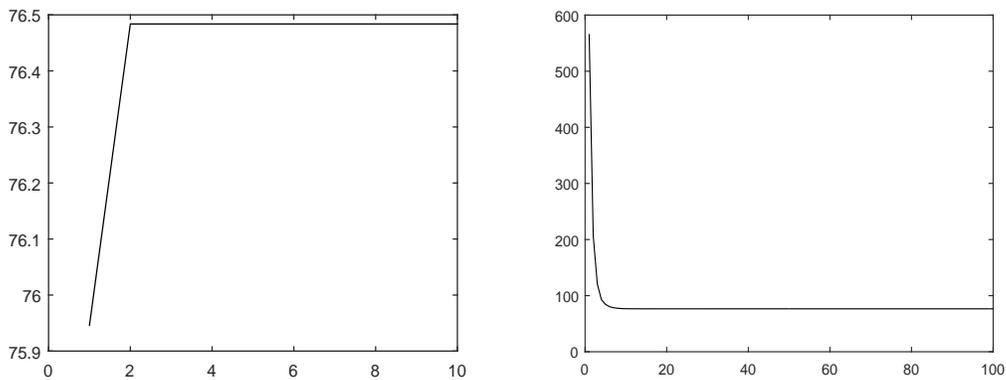}
    \vspace{-0.4cm}
\\ \caption{\small{on the left the objective function by iteration numbers for $t_1=0.5$, for the case with no filter. On the right the objective function by iteration numbers for $t_1=0.5$, for the case with filter.}} \label{figure4}
\end{center}
\end{figure}

\vspace{0.2cm}
\noindent{\bf Case 3}. For the third case see figure \ref{figure5}, on the left, for the concerning case, figure \ref{figure5}, in the middle,  for the optimal topology for  this case with no filter, figure \ref{figure5}, on the right, for the optimal topology
 for this third case with filter. For the objective function as function of number of iterations also for such a case with no filter, see figure \ref{figure6}, on the left,  and for the objective function as function of number of iterations  also for this third case with filter, see figure \ref{figure6}, on the right.
\begin{figure}[!htb]
\begin{center}
	\includegraphics[height=3.5cm,width=5cm]{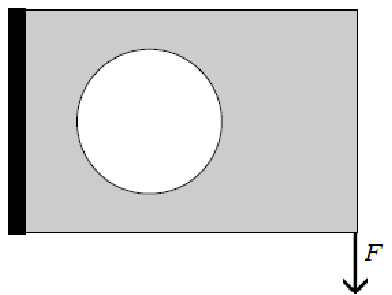}
    \includegraphics[height=3.5cm,width=5cm]{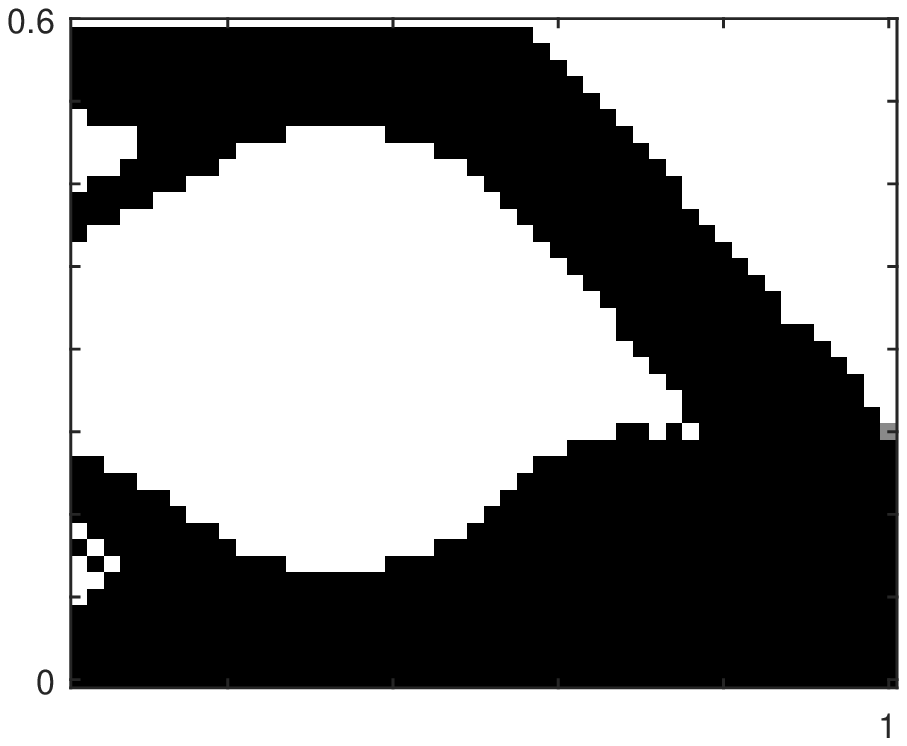}
    \includegraphics[height=3.5cm,width=5cm]{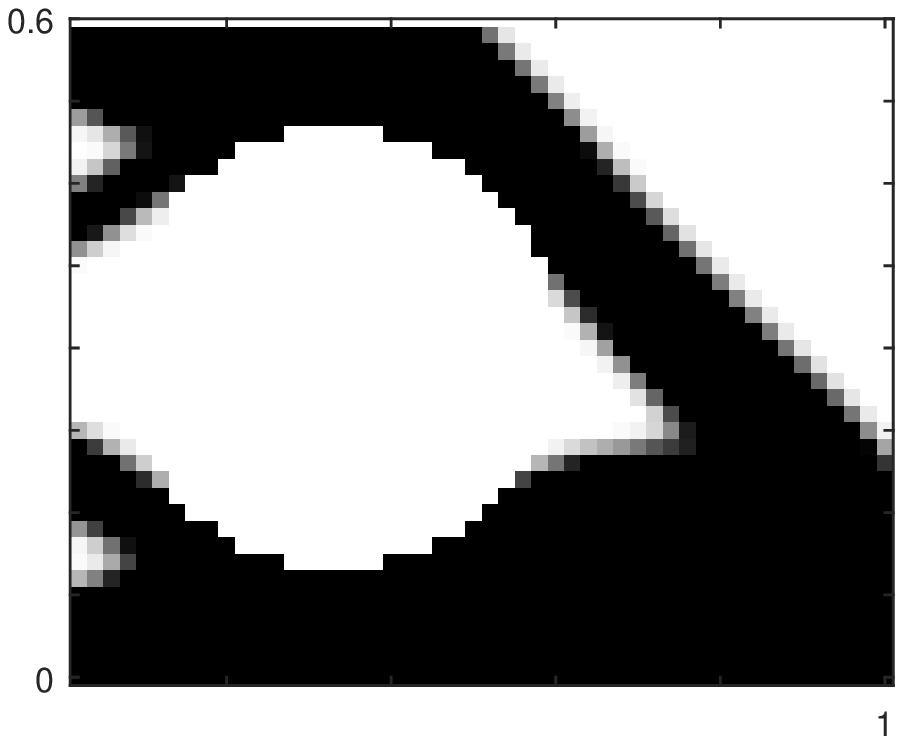}
    \vspace{-0.4cm}
\caption{\small{on the left a beam with a hole clamped at $x=0$. In the middle the optimal topology  for $t_1=0.5$, for the case with no filter. On the right the optimal topology  for $t_1=0.5$, for the case with filter. The FE mesh was 50x40.}} \label{figure5}
\end{center}
\end{figure}
\vspace{-0.5cm}
\begin{figure}[!htb]
\begin{center}
	\includegraphics[height=5.5cm,width=7cm]{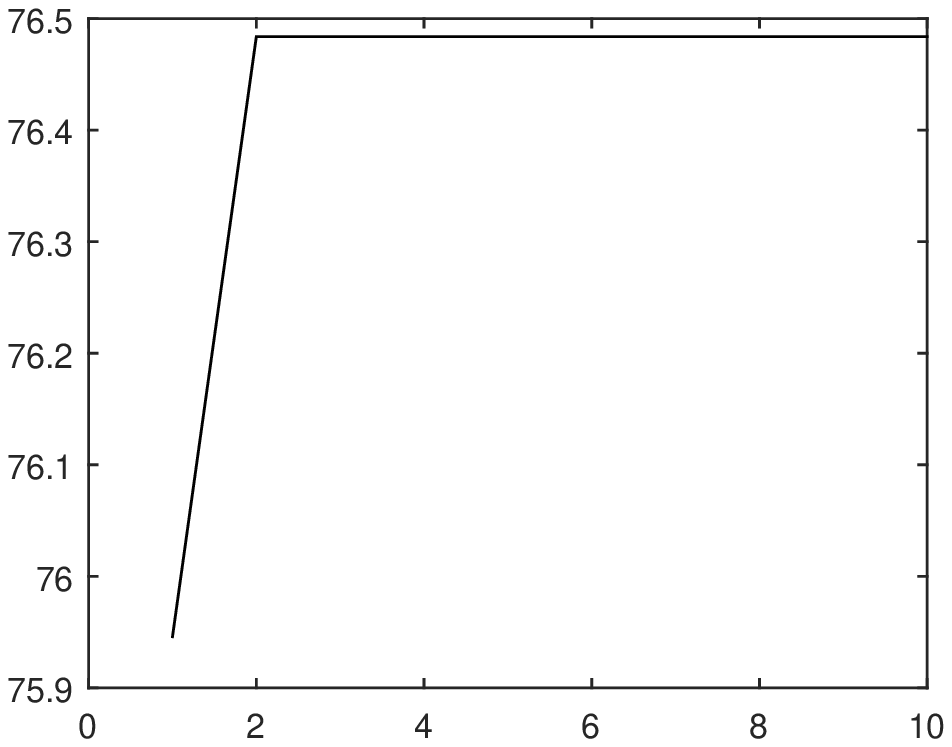}
    \includegraphics[height=5.5cm,width=7cm]{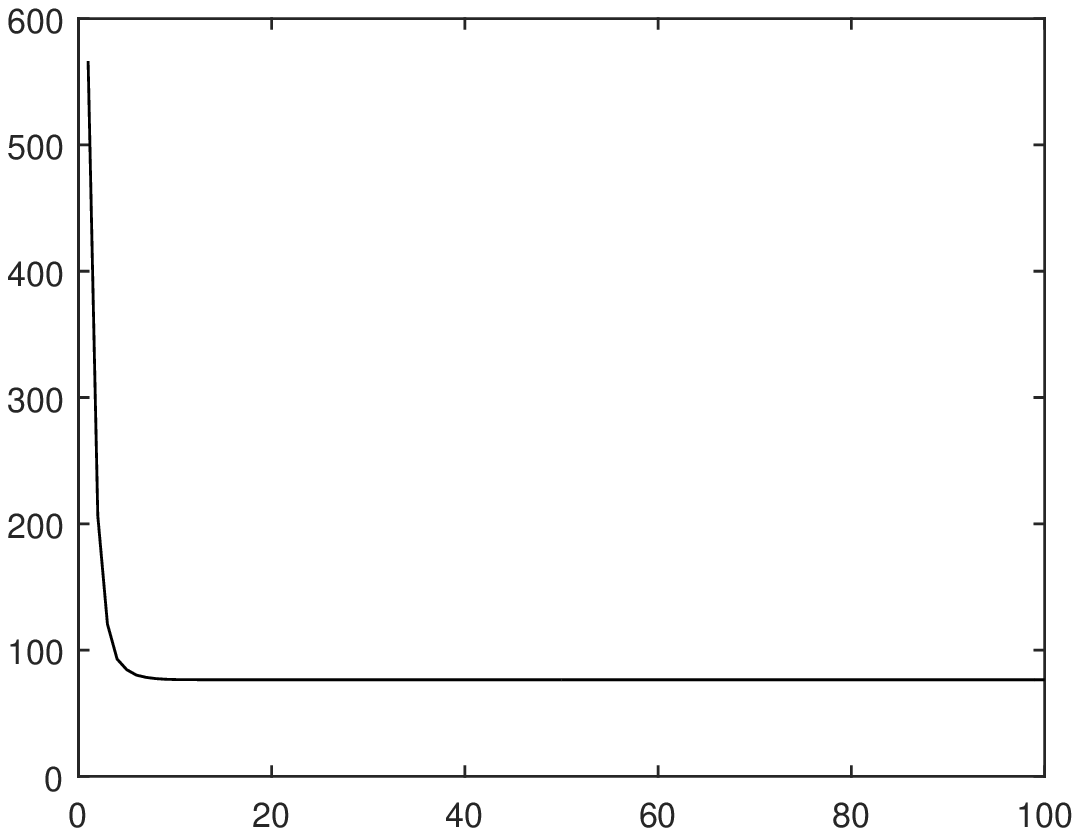}
    \vspace{-0.4cm}
\\ \caption{\small{on the left the objective function by iteration numbers for $t_1=0.5$, for the case with no filter. On the right the objective function by iteration numbers for $t_1=0.5$, for the case with filter.}} \label{figure6}
\end{center}
\end{figure}

\vspace{0.2cm}
\noindent{\bf Case 4}. For the fourth case see figure \ref{figure7}, on the left, for the concerning case, figure \ref{figure7}, in the middle,  for the optimal topology for  this case with no filter, figure \ref{figure7}, on the right, for the optimal topology
 for this fourth case with filter. For the objective function as function of number of iterations also for such a case with no filter, see figure \ref{figure8}, on the left,  and the objective function as function of number of iterations also for this fourth case with filter, see figure \ref{figure8}, on the right.
\begin{figure}[!htb]
\begin{center}
	\includegraphics[height=3.5cm,width=4cm]{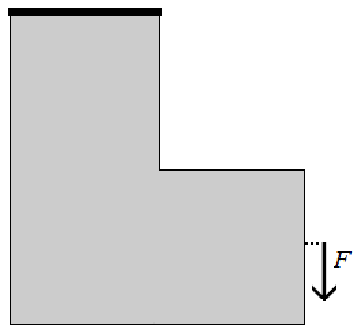}
    \includegraphics[height=4cm,width=6.5cm]{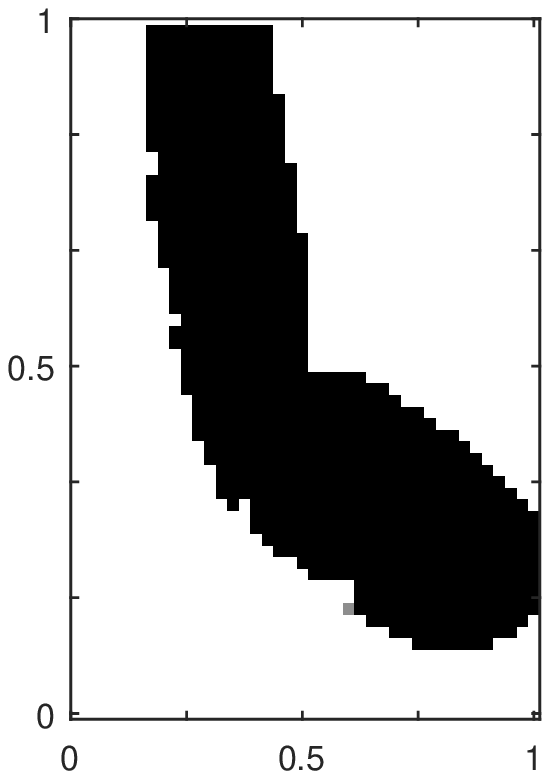}
    \includegraphics[height=4cm,width=5cm]{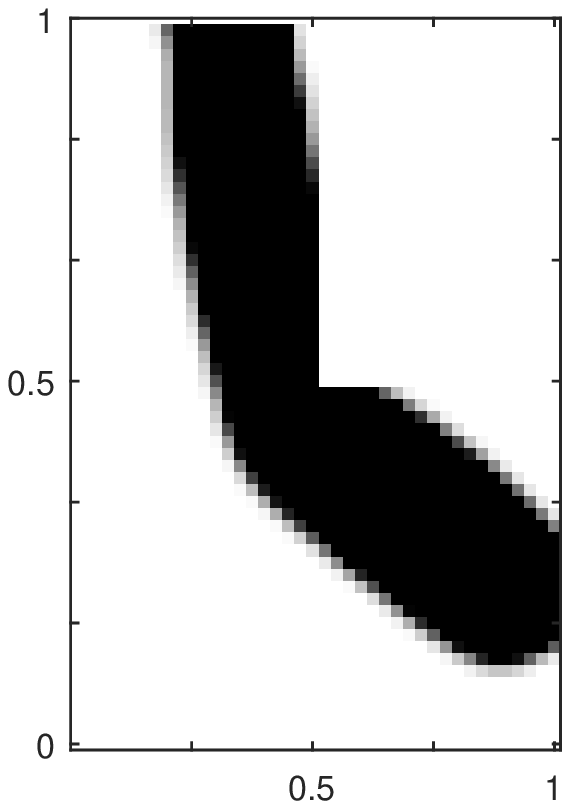}
    \vspace{-0.3cm}
\caption{\small{on the left a L shape beam clamped at $y=1$. In the middle the optimal topology  for $t_1=0.5$, for the case with no filter. On the right the optimal topology  for $t_1=0.5$, for the case with filter. The FE mesh was 40x60.}} \label{figure7}
\end{center}
\end{figure}
\begin{figure}[!htb]
\begin{center}
	\includegraphics[height=6cm,width=7cm]{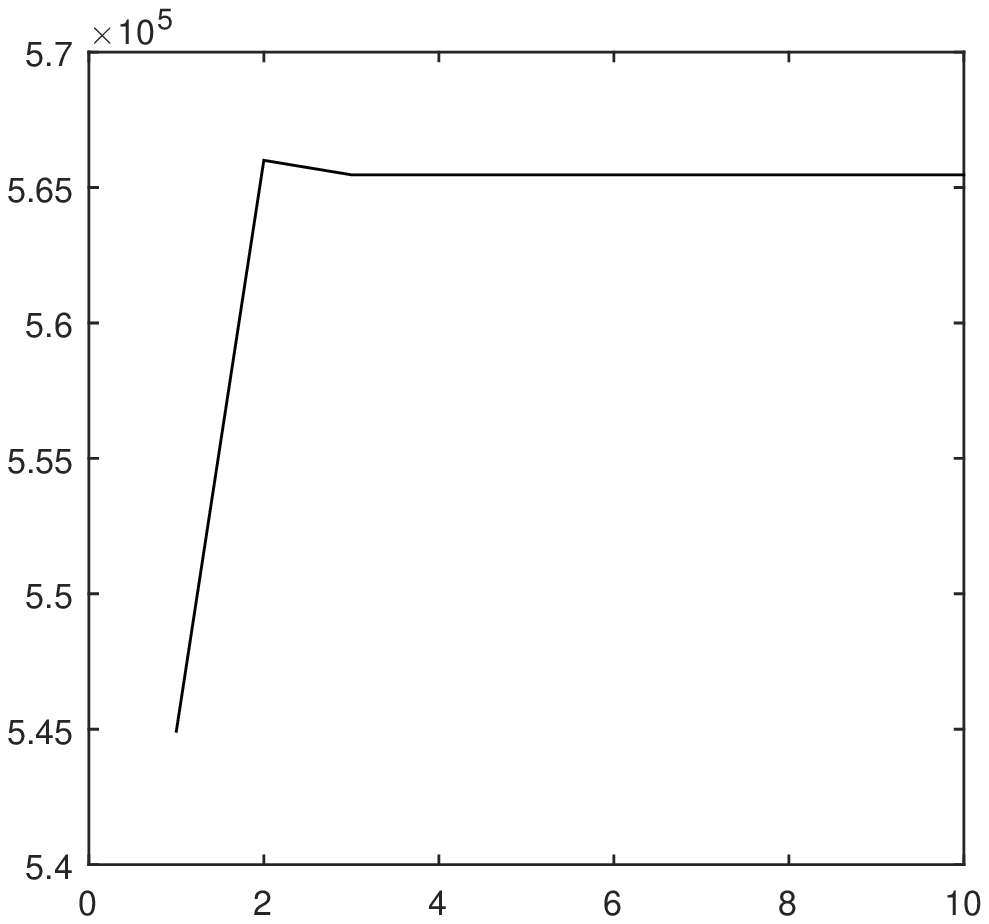}
    \includegraphics[height=6cm,width=7cm]{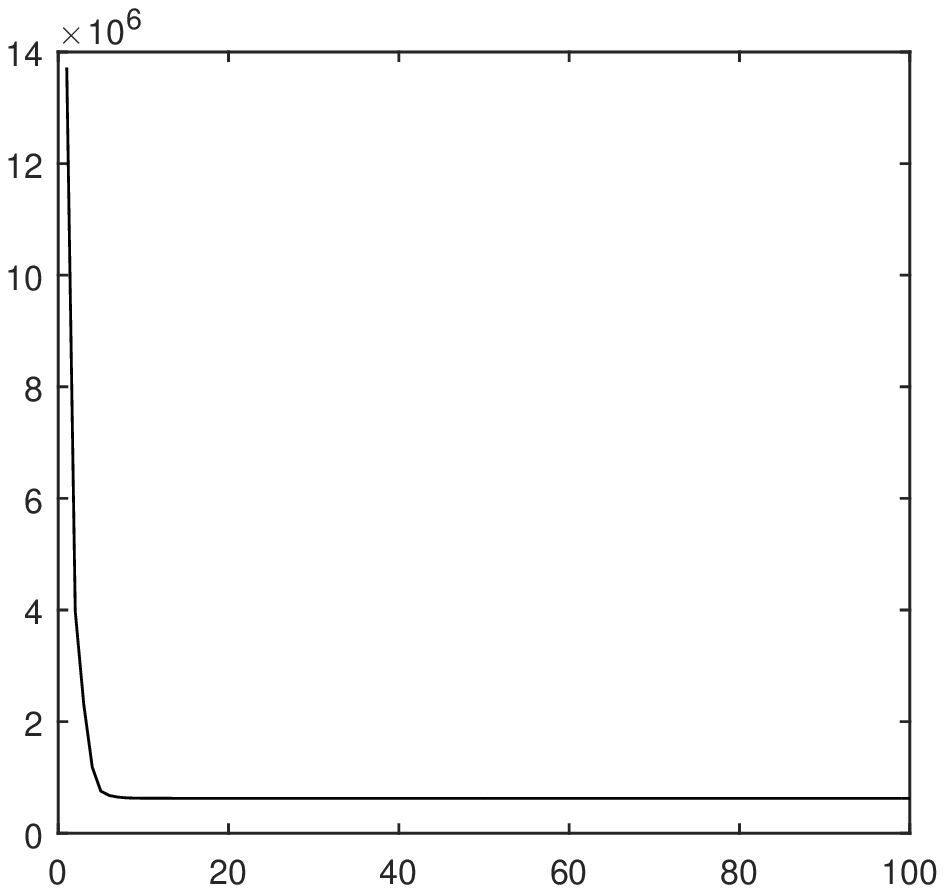}
    \vspace{-0.3cm}
\\ \caption{\small{on the left the objective function by iteration numbers for $t_1=0.5$, for the case with no filter. On the right the objective function by iteration numbers for $t_1=0.5$, for the case with filter.}} \label{figure8}
\end{center}
\end{figure}

We emphasize to have obtained in both optimized structures, without filter and with filter, robust topology from a structural point of view. One can note also in the figures that in all cases the objective functions, without filter and with filter, have similar final value, which indicates that the results obtained are consistent.

\section{Final remarks and conclusions}

In this article we have developed a duality principle and relating computational method for a class of structural optimization problems
in elasticity. It is worth mentioning we have not used a filter to post-process the results, having obtained a solution $t:\Omega \rightarrow \{0,1\}$
(that is, $t(x,y)=0$ or $t(x,y)=1$ in $\Omega$), by finding a critical point $(u_0,t_0) \in U \times B$ for the functional $J_1:U \times B \rightarrow \mathbb{R}.$ This corresponds, in some sense, to solving the dual problem.

We address some final remarks and conclusions on the results obtained.
\begin{itemize}
\item For all examples, in a first step, we have obtained numerical results through our algorithm with a software which uses the Matlab-Linprog as optimizer  at each iteration without any filter. In a second step, we obtain numerical results using the OC optimizer with filter, with a software developed based
in the  article \cite{3.1} by Sigmund, 2001.
\item We emphasize, to obtain good and consistent results, it is necessary to discretize more in the direction $y$, that is, the load direction, in which the displacements are much larger.
\item If we do not discretize enough in the load direction, for the software with no filter, a check-board standard in the material distribution is obtained in some  parts of the concerning struture.
\item Summarizing, with no filter, the check-board problem is solved by increasing the discretization in the load direction.
\item Moreover, with the OC optimizer with filter, the volume fraction of  material is kept constant in 0.5 at each iteration during the optimization process, whereas for the case with no filter we start with a volume fraction of 0.95 which is gradually decreased to the value 0.5, using  as the initial solution for  a iteration with a specific volume fraction, the solution of the previous one.
\item We also highlight the result obtained with no filter is indeed a critical point for the original optimization problem, whereas
there is some heuristic in the procedure with filter.
\item Once more we emphasize to have obtained more robust and consistent shapes by properly discretizing the approximate model in a
FE context.
\item Finally, it is also worth mentioning, we have obtained similar final objective function values without and with filter in all examples, even though without filter such values have been something smaller, as expected. The qualitative differences between the graphs without and with filter, for the objective function as function of the number of iterations, refer to the differences between the optimization
processes, where in the case with filter the volume fraction is kept  0.5 and without filter it is gradually decreased from 0.95 to 0.5, as above described.
\end{itemize}

We highlight the results obtained may be applied to other problems, such other models of plates, shells and elasticity.

\end{document}